\newtheorem{theorem}{Theorem}
\newtheorem{lemma}{Lemma}
\newtheorem{corollary}[lemma]{Corollary}
\DeclareMathOperator{\p}{Pr}
\newcommand{\pr}[1]{\p\left[#1\right]}
\DeclareMathOperator{\E}{\mathbb{E}}
\newcommand{\expect}[1]{\E\!\left[#1\right]}
\newcommand{\N}{\mathbb{N}}
\newcommand{\given}{\,\middle|\,}
\newcommand*\diff{\mathop{}\!\mathrm{d}}
\newcommand{\ind}{\mathds{1}}
\newcommand{\myceil}[1]{\left \lceil #1 \right \rceil}
\newcommand{\arthur}[1]{}
\date{}
\title{Revisiting the Random Subset Sum Problem}
\author[1]{Arthur C. W. Da Cunha}
\author[1,2]{Francesco D'Amore}
\author[1]{Frédéric Giroire}
\author[1]{Hicham Lesfari}
\author[1]{Emanuele Natale}
\author[3]{Laurent Viennot}
\affil[1]{Université Côte d'Azur, Inria Sophia Antipolis, CNRS}
\affil[2]{Aalto University}
\affil[3]{Inria Paris, IRIF
\protect\\\tt{\{arthur.carvalho-walraven-da-cunha,frederic.giroire,\protect\\hicham.lesfari,emanuele.natale,laurent.viennot\}@inria.fr \protect\\ francesco.damore@aalto.fi}%
}
\begin{document}
    \maketitle
    \begin{abstract}
The average properties of the well-known \emph{Subset Sum Problem} can be studied by the means of its randomised version, where we are given a target value $z$, random variables $X_1, \ldots, X_n$, and an error parameter $\varepsilon > 0$, and we seek a subset of the $X_i$s whose sum approximates $z$ up to error~$\varepsilon$.
In this setup, it has been shown that, under mild assumptions on the distribution of the random variables, a sample of size $\mathcal{O}(\log(1/\varepsilon))$ suffices to obtain, with high probability, approximations for all values in~$[-1/2, 1/2]$.
Recently, this result has been rediscovered outside the algorithms community, enabling meaningful progress in other fields.
In this work we present an alternative proof for this theorem, with a more direct approach and resourcing to more elementary tools.
\end{abstract}     \section{Introduction}\label{sec:intro}

In the \emph{Subset Sum Problem (SSP)}, one is given as input a set of $n$ integers $X = \{x_1, x_2, \ldots, x_n\}$ and a target value $z$, and wishes to decide if there exists a subset of $X$ that sums to $z$.
That is, one is to reason about a subset $S \subseteq [n]$ such that
$
    \sum_{i \in S} x_i = z.
$
The special case where $z$ is half of the sum of $X$ is known as the \emph{Number Partition Problem (NPP)}.
The converse reduction is also rather immediate.\footnote{
    To find a subset of $X$ summing to $z$, one only needs to solve the NPP for the set $X \cup \{2z, \sum_{i \in [n]} x_i\}$.
    By doing so, one of the parts must consist of the element $\sum_{i \in [n]} x_i$ alongside the desired subset.
}

Be it in either of these forms, the SSP finds applications in a variety of fields, ranging from combinatorial number theory \cite{Sun03} to cryptography \cite{GemmellJ01, KateG11}.
In complexity theory, the SSP is a well-known NP-complete problem, being a common base for NP-completeness proofs.
In fact, the NPP version figures among Garey and Johnson’s six basic NP-hard problems \cite{GareyJ79}.
Under certain circumstances, the SSP can be challenging even for heuristics that perform well for many other NP-hard problems \cite{Johnson91, Ruml96}, and a variety of dedicated algorithms have been proposed to solve it \cite{helm2018subset, bringmann2021near, jin2018simple, jin2021fast, esser2019low}.
Nonetheless, it is not hard to solve it in polynomial time if we restrict the input integers to a fixed range \cite{bellman1966dynamic}.
It suffices to recursively list all achievable sums using the first $i$ integers: we start with $A_0 = \{0\}$ and compute $A_{i+1}$ as $A_i \cup \{a + x_{i+1} \mid a \in A_i\}$.
For integers in the range $[0, R]$, the search space has size $\mathcal{O}(nR)$.

Studying how the problem becomes hard as we consider larger ranges of integers (relative to $n$) requires a randomised version of the problem, the \emph{Random Subset Sum Problem (RSSP)}, where the input values are taken as independently and identically distributed random variables.
In this setup, the work \cite{BorgsCP01} proved that the problem experiences a phase transition in its average complexity as the range of integers increases.

The result we approach in this work comes from related studies on the typical properties of the problem.
In \cite{lueker1998} the author proves that, under fairly general conditions, the expected minimal distance between a subset sum and the target value is exponentially small.
More specifically, they show the following result.
\begin{theorem}[Lueker, 1998]\label{thm:lueker}
    Let $X_1, \dots, X_n$ be independent uniform random variables over $[-1, 1]$, and let $\varepsilon \in (0, 1/3)$.
    There exists a universal constant $C > 0$ such that, if $n \ge C\log(1/\varepsilon)$, then, with probability at least $1 - \varepsilon$, for all $z \in [-1, 1]$ there exists $S_z \subseteq [n]$ for which
    \begin{equation*}\label{eq:lueker-thm}
        \Bigl\lvert z - \sum_{i \in S_z} X_i \Bigr\rvert \le \varepsilon.
    \end{equation*}
\end{theorem}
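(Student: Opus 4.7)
My plan is to track the shrinkage of the uncovered portion of $[-1,1]$ as samples $X_1,X_2,\dots$ are revealed one at a time, and to show that after $\Theta(\log(1/\varepsilon))$ samples it has vanished with probability at least $1-\varepsilon$. Let $A_k = \{\sum_{i\in S}X_i : S\subseteq[k]\}$ be the set of subset sums of the first $k$ samples, let $C_k = \{z\in\R : \operatorname{dist}(z,A_k)\le\varepsilon\}$ be its $\varepsilon$-thickening, let $U_k = [-1,1]\setminus C_k$ be the uncovered set, and let $\mu_k = |U_k|$ be its Lebesgue measure. The recursion $A_{k+1}=A_k\cup(A_k+X_{k+1})$ yields $C_{k+1} = C_k\cup(C_k+X_{k+1})$, so that a point $z\in U_k$ survives into $U_{k+1}$ only if additionally $z-X_{k+1}\notin C_k$; modulo boundary effects this gives the inclusion $U_{k+1} \subseteq U_k\cap(U_k+X_{k+1})$.

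The central estimate is a one-step contraction $\expect{\mu_{k+1}\given X_1,\dots,X_k}\le \alpha\,\mu_k$ for some explicit $\alpha<1$. I would prove it via the Fubini identity
\[
    \int_{-1}^{1} |U_k\cap(U_k+x)|\diff x \;\le\; |U_k|^2,
\]
which shows that a uniform shift by $X_{k+1}\sim\operatorname{Unif}[-1,1]$ eliminates, in expectation, a definite fraction of $U_k$ whenever $\mu_k$ is bounded away from the full length $2$. Iterating this contraction along the filtration generated by $(X_k)_k$, and then applying Markov's inequality, lets me conclude that $\pr{\mu_n \le \varepsilon^2}\ge 1-\varepsilon$ once $n\ge C\log(1/\varepsilon)$ for a suitable constant $C$.

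I anticipate two main obstacles. The first is the boundary: for $z\in U_k$ within distance $1$ of $\pm 1$, the translate $z-X_{k+1}$ may leave $[-1,1]$ yet still avoid $C_k$, so the clean inclusion $U_{k+1}\subseteq U_k\cap(U_k+X_{k+1})$ fails there. I would address this by enlarging the working interval slightly and showing that the boundary-affected region contributes only a lower-order correction per step, harmless for the geometric decay. The more substantial obstacle is the qualitative gap between ``$\mu_n$ is tiny'' and ``$U_n=\emptyset$'', since the theorem demands pointwise $\varepsilon$-approximation rather than almost-everywhere approximation. To bridge it, I would exploit the structural observation that $U_k$ is always a finite union of open intervals — the gaps between the $\varepsilon$-neighbourhoods of subset sums in $A_k$. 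Once $\mu_n<2\varepsilon$, every such gap is individually shorter than $2\varepsilon$, and a short supplementary phase using $O(\log(1/\varepsilon))$ additional samples translates some already-covered block onto each surviving gap, eliminating all of them simultaneously with total failure probability at most $\varepsilon$.
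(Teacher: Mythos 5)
There is a genuine gap in the contraction step, and it occurs exactly where the problem is hardest.

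Your Fubini identity gives, ignoring boundaries, $\expect{\mu_{k+1}\given X_1,\dots,X_k}\le\tfrac12\mu_k^2=\tfrac{\mu_k}{2}\cdot\mu_k$, so the one-step contraction factor is $\tfrac{\mu_k}{2}$. At time $0$ the covered set is exactly $(-\varepsilon,\varepsilon)$, so $\mu_0=2-2\varepsilon$ and the factor is $1-\varepsilon$: it is \emph{not} bounded away from $1$ by a constant, and a naive geometric-decay argument would suggest $\Theta(1/\varepsilon)$ steps, not $\Theta(\log(1/\varepsilon))$. Your caveat ``whenever $\mu_k$ is bounded away from $2$'' quietly excludes precisely the regime in which the process starts. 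What saves the day is that the \emph{deficit} $2-\mu_k$ grows multiplicatively in this regime (since $2-\tfrac{\mu_k^2}{2}=(2-\mu_k)\bigl(1+\tfrac{\mu_k}{2}\bigr)$), but to exploit that you must switch to tracking the covered measure, not the uncovered one. This is exactly the structure of the paper: Lemma~\ref{lem:expectation-volume} proves $\expect{v_{t+1}\given\cdot}\ge v_t\bigl[1+\tfrac14(1-v_t)\bigr]$ and the argument is split into a first phase where $v_t$ grows geometrically from $\varepsilon$ to $1/2$ and a second phase where $1-v_t$ shrinks geometrically from $1/2$ to $\varepsilon/2$. A single ``contract the uncovered mass'' phase cannot produce the logarithmic bound.

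A second, related difficulty is that even once the conditional bound is in hand, you cannot simply ``iterate the contraction along the filtration and apply Markov at the end.'' The conditional bound is quadratic in $\mu_k$, and $\expect{\mu_k^2}\ge\expect{\mu_k}^2$ by Jensen, so the tower property does not propagate a clean inequality on $\expect{\mu_k}$. (In the worst case $\mu_k^2\le 2\mu_k$ gives only $\expect{\mu_{k+1}}\le\expect{\mu_k}$, i.e.\ no progress.) The paper circumvents this in the first phase by converting the conditional expectation bound into a \emph{constant-probability} one-step multiplicative gain via reverse Markov (Lemma~\ref{lem:exit-proba}) and then bounding the hitting time $\tau_1$ by a sum of geometric random variables with a Chernoff--Hoeffding tail (Lemma~\ref{lem:tau1-bound}); only in the second phase, where the one-step bound is genuinely linear, is plain Markov applied to $\expect{w_t}$ (Lemma~\ref{lem:tau2-bound}). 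Your plan is missing the first of these two mechanisms. The boundary issue you flag is real but is the easier of the two to handle — the paper simply restricts the inner integral to a length-$1$ window $[u^*-\tfrac12,u^*+\tfrac12]\subseteq[-1,1]$ chosen by the intermediate value theorem to capture exactly half of the covered mass, paying a constant factor.

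Finally, your closing ``supplementary phase'' to pass from small uncovered measure to pointwise coverage is unnecessary. Once the uncovered set has measure at most $\varepsilon$, each of its finitely many gaps has length at most $\varepsilon$, so every $z\in[-1,1]$ is within $\varepsilon$ of a covered point and hence $2\varepsilon$-approximated; running the whole argument with error parameter $\varepsilon/2$ then yields the theorem as stated with only a constant-factor change in $C$. This is how the paper concludes, with no extra sampling phase and no extra failure probability to budget.
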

That is, a rather small number (of the order of $\log\frac{1}{\varepsilon}$) of random variables suffices to have a high probability of approximating not only a single target $z$, but all values in an interval.

Even though \cref{thm:lueker} is stated and proved for uniform random variables over $[-1, 1]$, it is not hard to extend the result to a wide class of distributions.\footnote{Distributions whose probability density function $f$ satisfies $f(x) \ge b$ for all $x \in [-a, a]$, for some constants $a, b > 0$ (see Corollary 3.3 from \cite{lueker1998}).}
With this added generality, the theorem becomes a powerful tool for the analysis of random structures, and has recently proven to be particularly useful in the field of Machine Learning, taking part in a proof of the Strong Lottery Ticket Hypothesis \cite{PensiaRNVP20} and in subsequent related works \cite{daCunhaNV22, FischerB21, BurkholzLMG21}, and in Federated Learning \cite{WangDMWLLHMRD21}.

Generalisations of the RSSP have played important roles in the study of random Knapsack problems \cite{beierRandomKnapsackExpected2003,beierProbabilisticAnalysisKnapsack2004}, and to random binary integer programs \cite{borstIntegralityGapBinary2022,borst2022discrepancy}.
In particular, the works \cite{becchettiMultidimensionalRandomSubset2022}, \cite{borst2022discrepancy}, and \cite{borstIntegralityGapBinary2022} recently provided an extension of \cref{thm:lueker} to multiple dimensions.
As for the equivalent Random Number Partitioning Problem, \cite{chen2022} recently generalised \cite{BorgsCP01} and the integer version of the RSSP to non-binary integer coefficients.

The simplicity and ubiquity of the SSP has granted the related results a special didactic place.
Be it as a first example of NP-complete problem \cite{GareyJ79}, a path to science communication \cite{Hayes02}, or simply as a frame for the demonstration of advanced techniques \cite{Mertens01}, it has been a tool to make important, but sometimes complicated, ideas easier to communicate.

This work offers a substantially simpler alternative to the original proof of \cref{thm:lueker} by following a general framework introduced in the context of the analysis of Rumour Spreading algorithms \cite{doerrRandomizedRumorSpreading2017}.
Originally, the work \cite{lueker1998} approaches \cref{thm:lueker} by considering the random variable associated to the proportion of the values in the interval $[-1, 1]$ that can be approximated up to error $\varepsilon$ by the sum of some subset of the first $t$ variables, $X_1, \ldots, X_t$.

After restricting to some specific types of subsets, they proceed to evaluate the expected per-round growth of this proportion, conditioned on the outcomes of $X_1, \ldots, X_t$.
Their strategy is to analyse this expected increase by martingale theory, which only becomes possible after a non-linear transformation of the variables of interest.
Those operations hinder any intuition for the obtained martingale.
Nonetheless, a subsequent application of the Azuma-Hoeffding bound \cite{azuma1967weighted} followed by a case analysis leads to the result.

The argument presented here starts in the same direction as the original one, tracking the mass of values with suitable approximations as we reveal the values of the random variables $X_1, \ldots, X_n$ one by one.
However, we quickly diverge from \cite{lueker1998}, managing to obtain an estimation of the expected growth of this mass without discarding any subset-sum.
We eventually restrict the argument to some types of subsets, but we do so at a point where the need for such restriction is clear.

We proceed to directly analyse the estimation obtained, without any transformations.
Following \cite{doerrRandomizedRumorSpreading2017}, this estimation reveals two expected behaviours in expectation, which can be analysed in a similar way:
as we consider the first variables, the proportion of approximated values grows very fast;
then, after a certain point, the proportion of non-approximable values decreases very fast.

We remark that, while \cref{thm:lueker} crucially relies on tools from martingale theory such as Azuma-Hoeffding's inequality, which are not part of standard Computer Science curricula, our argument makes use of much more elementary results\footnote{Namely, the intermediate value theorem, Markov's inequality, and standard Hoeffding bounds.}
which should make it accessible enough for an undergraduate course on randomised algorithms.
     \section{Our argument}
In this section, we provide an alternative argument for proving \cref{thm:lueker}.
It takes shape much like the pseudo-polynomial algorithm we described in the introduction.
Leveraging the recursive nature of the problem, we construct a process which, at time $t$, describes the proportion of the interval $[-1,1]$ that can be approximated by some subset of the first $t$ variables.

We will show that with a suitable number of uniform variables (proportional to $\log(1/\varepsilon)$) a factor of $1 - \varepsilon/2$ of the values in $[-1, 1]$ can be approximated up to error $\varepsilon$.
This implies that any $z \in [-1,1]$ which cannot be approximated within error $\varepsilon$ is at most $\varepsilon$ away from a value that can.
Therefore it is possible to approximate $z$ up to error $2\varepsilon$.

\subsection{Preliminaries}
Let $X_1, \ldots, X_n$ be realisations of random variables as in \cref{thm:lueker}, and, without loss of generality, fix $\varepsilon > 0$.
We say a value $z \in \mathbb{R}$ is \emph{$\varepsilon$-approximated at time $t$} if and only if there exists $S \subseteq [t]$ such that
$
    \lvert z-\sum_{i\in S} X_i \rvert < \varepsilon.
$
For $0 \le t \le n$, let $f_t\colon \mathbb{R} \to \{0, 1\}$ be the indicator function for the event ``$z$ is $\varepsilon$-approximated at time $t$''.
Therefore, we have $f_0 = \ind_{(-\varepsilon,\varepsilon)},$ since only the interval $(-\varepsilon,\varepsilon)$ can be approximated by an empty set of values.
From there, we can exploit the recurrent nature of the problem: a value $z$ can be $\varepsilon$-approximated at time $t+1$ if and only if either $z$ or $z - X_{t + 1}$ could already be approximated at time $t$.
This implies that for all $z \in \mathbb{R}$ we have that
\begin{equation}\label{eq:recurrence-f}
    f_{t+1}(z) = f_t(z) + \left(1-f_t(z)\right) f_t(z-X_{t+1}).
\end{equation}

To keep track of the proportion of values in $[-1, 1]$ that can be $\varepsilon$-approximated at each step, we define, for each $0 \le t \le n$, the random variable
$$
    v_t = \frac{1}{2}\int_{-1}^1 f_t(z) \diff z.
$$
For better readability, throughout the text we will refer to $v_t$ simply as ``the volume.''

As we mentioned, it suffices to show that, with high probability, at time $n$, enough of the interval is $\varepsilon$-approximated (more precisely, that $v_n \ge 1 - \varepsilon/2$) to conclude that the entire interval is $2\varepsilon$-approximated.

\subsubsection{Expected behaviour}

Our first lemma provides a lower bound on the expected value of $v_t$.

\begin{lemma}\label{lem:expectation-volume}
    For all $0 \le t < n$, it holds that
    $$
        \expect{v_{t+1} \given X_1,\dots,X_t} \ge v_t \left[ 1 + \frac{1}{4} \left(1-v_t\right)\right].
    $$
\end{lemma}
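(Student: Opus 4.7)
The plan is to unpack the conditional expectation using the recurrence \cref{eq:recurrence-f}, integrate out $X_{t+1}$ to turn the expected increment into a deterministic geometric quantity, and then control that quantity via a short rearrangement argument combined with a symmetry between the approximable set and its complement.

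Plugging \cref{eq:recurrence-f} into the definition of $v_{t+1}$ gives $v_{t+1} - v_t = \frac{1}{2}\int_{-1}^1 (1-f_t(z)) f_t(z - X_{t+1}) \diff z$. Since $X_{t+1}$ is uniform on $[-1,1]$, taking the conditional expectation and substituting $y = z - X_{t+1}$ in the inner integral yields
\begin{equation*}
    \expect{v_{t+1}\given X_1,\dots,X_t} - v_t = \frac{1}{4}\int_{-1}^1 (1-f_t(z))\int_{z-1}^{z+1} f_t(y) \diff y \diff z.
\end{equation*}
Let $A = \{y \in [-1,1] : f_t(y) = 1\}$, so that $|A| = 2v_t$, and write $A^c = [-1,1] \setminus A$. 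Bounding $f_t(y) \ge \ind_A(y)$ (the only loss comes from subset sums whose value lies outside $[-1,1]$) reduces the lemma to the purely deterministic inequality
\begin{equation*}
    I := \int_{A^c} |A \cap [z-1,z+1]| \diff z \ge v_t(1-v_t),
\end{equation*}
since then $\expect{v_{t+1}\given X_1,\dots,X_t} \ge v_t + I/4 \ge v_t\bigl[1 + (1-v_t)/4\bigr]$.

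For this geometric claim, I would apply Fubini to rewrite $I = \int_A |A^c \cap [a-1,a+1]| \diff a$ and combine it with the elementary estimate $|A^c \cap [a-1,a+1]| \ge (2-|a|) - 2v_t$, valid for every $a \in [-1,1]$. When $v_t \le 1/2$ the right-hand side is non-negative throughout $A$, so integrating gives $I \ge 4v_t(1-v_t) - \int_A |a| \diff a$; a short layer-cake computation then shows $\int_A |a| \diff a \le v_t(2-v_t)$ (the maximum being attained when $A$ sits at the endpoints of $[-1,1]$), which yields $I \ge v_t(2-3v_t) \ge v_t(1-v_t)$ in this regime. For $v_t > 1/2$, the same argument applies verbatim to $B = A^c$ (which has $|B|/2 = 1-v_t \le 1/2$); noting that $I$ is invariant under $A \leftrightarrow A^c$ by Fubini, the bound transfers.

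The main obstacle is this final geometric step: a pointwise estimate such as $|A \cap [z-1,z+1]| \ge \max(2v_t-|z|,\, 0)$ is too weak near the edges of $[-1,1]$, and what rescues the clean constant $1/4$ is the sharp rearrangement bound on $\int_A |a| \diff a$ together with the case split at $v_t = 1/2$. Everything upstream---the conditional expectation, the change of variables, and the reduction to $A$---is routine.
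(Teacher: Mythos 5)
Your proof is correct, and it takes a genuinely different route from the paper's once the routine reduction to the double integral
$\tfrac14\int_{-1}^1(1-f_t(z))\int_{z-1}^{z+1}f_t(y)\diff y\diff z$
is in place. The paper instead restricts both integrals to a sliding half-width window $[u-\tfrac12,u+\tfrac12]$ and invokes the intermediate value theorem to locate a $u^*$ for which that window captures exactly half of the approximated mass, i.e.\ $\tfrac12\int_{u^*-1/2}^{u^*+1/2}f_t(y)\diff y = v_t/2$; since $[u^*-\tfrac12,u^*+\tfrac12]\subseteq[z-1,z+1]$ for every $z$ in the window, the inner integral is at least $v_t$ there, and the outer integral over $1-f_t$ on the same window is $1-v_t$ by choice of $u^*$, giving $v_t(1-v_t)$ in one stroke with no case split. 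Your argument instead drops to $\ind_A$, applies Fubini to get $I=\int_A|A^c\cap[a-1,a+1]|\diff a$, uses the pointwise estimate $|A^c\cap[a-1,a+1]|\ge(2-|a|)-2v_t$ together with the sharp rearrangement bound $\int_A|a|\diff a\le v_t(2-v_t)$, and finally handles $v_t>1/2$ via the symmetry $I(A)=I(A^c)$. All these steps check out (I verified the layer-cake computation and the final inequality $v_t(2-3v_t)\ge v_t(1-v_t)$ iff $v_t\le 1/2$). The trade-off: the paper's IVT trick avoids the rearrangement lemma and the $v_t\le 1/2$ vs.\ $v_t>1/2$ dichotomy entirely, while your version replaces a slightly non-constructive existence claim with an explicit geometric estimate. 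One small thing worth keeping in mind if you continue to \cref{lem:exit-proba}: the paper's window argument is reused there to show that the truncated quantity $\tilde v$ obeys the same lower bound, whereas your reduction to $\ind_A$ would need to be checked compatible with that truncation (it is, since restricting to $A\subseteq[-1,1]$ is exactly the truncation used to define $\tilde v$).
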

\begin{proof}
The definition of $v_t$ and the recurrence in \cref{eq:recurrence-f} give us that
\begin{align*}
    \expect{v_{t+1} \given X_1, \dots, X_t}
    &= \expect{\frac{1}{2}\int_{-1}^1 f_{t+1}(z) \diff z \,\bigg|\, X_1, \dots, X_t}\\
    &= \int_{-1}^1 \frac{1}{2} \left(\frac{1}{2} \int_{-1}^1 f_t(z) + \left(1-f_t(z)\right) f_t(z - x) \diff z\right) \diff x\\
    &= \frac{1}{2} \int_{-1}^1 f_t(z) \diff z \int_{-1}^1 \frac{1}{2} \diff x + \frac{1}{2} \int_{-1}^1 \frac{1}{2} \int_{-1}^1 \left(1-f_t(z)\right) f_t(z - x) \diff z \diff x\\
    &= v_t + \frac{1}{4} \int_{-1}^1 \left(1 - f_t(z)\right) \int_{-1}^1 f_{t}(z-x) \diff x \diff z\\
    &= v_t + \frac{1}{4} \int_{-1}^1 \left(1 - f_t(z)\right) \int_{z - 1}^{z + 1} f_t(y)\diff y \diff z,
\end{align*}
where the last equality holds by substituting $y = z - x$.
For the previous ones we apply basic properties of integrals and Fubini's theorem to change the order of integration.

We now look for a lower bound for the last integral in terms of $v_t$.
To this end, we exploit that, since all integrands are non-negative, for all $u \in [-1/2, 1/2]$ we have that
\begin{align*}
    \int_{-1}^1 \left(1 - f_t(z)\right) \int_{z - 1}^{z + 1} f_t(y)\diff y \diff z
    &\ge \int_{u - \frac{1}{2}}^{u + \frac{1}{2}} \left(1 - f_t(z)\right) \int_{z - 1}^{z + 1} f_t(y)\diff y \diff z\\
    &\ge \int_{u - \frac{1}{2}}^{u + \frac{1}{2}} \left(1 - f_t(z)\right) \int_{u - \frac{1}{2}}^{u + \frac{1}{2}} f_t(y)\diff y \diff z.
\end{align*}
Both inequalities come from range restrictions: in the first we use that $u \in [-1/2, 1/2]$ implies $[u - 1/2, u + 1/2] \subseteq [-1, 1]$; for the second, we have that $[u - 1/2, u + 1/2] \subseteq [z-1, z+1]$ for all $z \in [u - 1/2, u + 1/2]$.\arthur{say this before showing the integrals... people are scared of them}
\arthur{Split into two inequalities.}

To relate the expression to $v_t$ explicitly, we choose $u$ in a way that the window $[u - 1/2, u + 1/2]$ entails exactly half of $v_t$.
The existence of such $u$ may become clear by recalling the definition of $v_t$.
To make it formal, consider the function given by
$$
    h(u) = \frac{1}{2} \int_{u - \frac{1}{2}}^{u + \frac{1}{2}} f_t(y) \diff y,
$$
and observe that
$$
    \min\,\{h(-1/2),\, h(1/2)\} \le \frac{v_t}{2},
    \qquad\text{ and }\qquad
    \max\,\{h(-1/2),\, h(1/2)\} \ge \frac{v_t}{2}.
$$
Thus, by the intermediate value theorem, there exists $u^* \in [-1/2, 1/2]$ for which $h(u^*) = v_t/2$, that is, for which
$$
    \frac{1}{2} \int_{u^* - \frac{1}{2}}^{u^* + \frac{1}{2}} f_t(y) \diff y = \frac{v_t}{2}.
$$

Altogether, we can conclude that
\begin{align*}
    \expect{v_{t+1} \given X_1, \dots, X_t}
    &= v_t + \frac{1}{4} \int_{-1}^1 \left(1 - f_t(z)\right) \int_{z - 1}^{z + 1} f_t(y)\diff y \diff z\\
    &\ge v_t + \frac{1}{2} \int_{u^* - \frac{1}{2}}^{u^* + \frac{1}{2}} \left(1 - f_t(z)\right) \biggl(\frac{1}{2} \int_{u^* - \frac{1}{2}}^{u^* + \frac{1}{2}} f_t(y)\diff y\biggr) \diff z\\
    &= v_t + \left(\frac{1}{2} - \frac{v_t}{2}\right) \frac{v_t}{2}\\
    &= v_t \left[ 1 + \frac{1}{4} \left(1-v_t\right)\right].
\end{align*}
\end{proof}

\cref{lem:expectation-volume} tells us that, if $v_t$ were to behave as expected, it should grow exponentially up to $1/2$, at which point $1 - v_t$ starts to decrease exponentially.
The rest of the proof follows accordingly, with \cref{subseq:to_one_half} analysing the progress of $v_t$ up to one half, and \cref{subseq:from_one_half} analogously following the complementary value, $1 - v_t$, starting from one half.
By building on the results from \cref{subseq:to_one_half}, we obtain fairly straightforward proofs in \cref{subseq:from_one_half}.
Thus, the following subsection comprises the core of our argument.

\subsection{Growth of the volume up to 1/2}\label{subseq:to_one_half}
Arguably, the main challenge in analysing the RSSP is the existence of over-time dependencies and deciding how to overcome it sets much of the course the proof will take.
Our strategy consists in constructing another process which dominates the original one while being free of dependencies.

Let $\tau_{1}$ be the first time at which the volume exceeds $1/2$, that is, let
$$
    \tau_{1} = \min\{t \ge 0 : v_t > 1/2\}.
$$
We just proved that up to time $\tau_1$ the process $v_t$ enjoys exponential growth in expectation.
In the following lemma we apply a basic concentration inequality to translate this property into a constant probability of exponential growth for $v_t$ itself.
\begin{lemma}\label{lem:exit-proba}
    Given $\beta \in (0, 1/8)$, let $p_{\beta} = 1 - \frac{7}{8(1-\beta)}$.
    For all integers $0 \le t < \tau_1$ it holds that
    $$
        \pr{v_{t+1} \ge v_t(1+\beta) \given X_1, \dots, X_t,\, t < \tau_{1}} \geq p_{\beta}.
    $$
\end{lemma}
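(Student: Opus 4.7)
The plan is to apply Markov's inequality to the nonnegative random variable $1-v_{t+1}$, combined with the expected-growth bound from \cref{lem:expectation-volume}. Since $v_{t+1} \in [v_t, 1]$ almost surely (the process is monotone nondecreasing and bounded by $1$), the quantity $1-v_{t+1}$ is nonnegative. \cref{lem:expectation-volume} then yields
\[
    \expect{1-v_{t+1} \given X_1, \ldots, X_t}
    \le 1 - v_t\left[1 + (1-v_t)/4\right]
    = (1-v_t)(1 - v_t/4).
\]
The conditioning on $t < \tau_1$ guarantees $v_t \le 1/2$, and together with $\beta < 1/8$ this ensures $v_t(1+\beta) < 9/16 < 1$, so the threshold $1-v_t(1+\beta)$ is strictly positive.

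Since the event $\{v_{t+1} < v_t(1+\beta)\}$ coincides with $\{1-v_{t+1} > 1-v_t(1+\beta)\}$, Markov's inequality yields
\[
    \pr{v_{t+1} < v_t(1+\beta) \given X_1, \ldots, X_t,\, t < \tau_1}
    \le \frac{(1-v_t)(1 - v_t/4)}{1-v_t(1+\beta)}.
\]
I would then verify that, under the constraint $v_t \le 1/2$, this right-hand side is bounded by $7/(8(1-\beta))$. Substituting $v_t = 1/2$ gives exact equality, $(1/2)(7/8)/((1-\beta)/2) = 7/(8(1-\beta))$, identifying the critical case that pins down the constant in $p_\beta$. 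Taking complements then gives $\pr{v_{t+1} \ge v_t(1+\beta) \given X_1, \ldots, X_t,\, t < \tau_1} \ge 1 - 7/(8(1-\beta)) = p_\beta$, as claimed.

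The hard part will be the final algebraic verification: while the Markov bound is tight at $v_t = 1/2$, checking that it remains controlled by $7/(8(1-\beta))$ throughout $v_t \in [0, 1/2]$ reduces to a polynomial inequality in $v_t$ and $\beta$ that needs a careful case analysis (cross-multiplying gives a factorisation involving $(2v_t-1)$ and $((1-\beta)v_t - (1-8\beta))$). Should this bound prove too loose at small $v_t$, an alternative route is to combine the Markov step (for $v_t$ near $1/2$) with the essentially trivial observation $v_{t+1} \ge v_t$ for very small $v_t$, where the threshold $\beta v_t$ is dominated by the typical per-step increment predicted by \cref{lem:expectation-volume}.
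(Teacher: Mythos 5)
Your plan has a genuine gap: the forward Markov bound on $1-v_{t+1}$ is too weak at small $v_t$, and this is not a minor tightening issue but a structural failure. Concretely, your bound reads
\[
    \pr{v_{t+1} < v_t(1+\beta) \given \cdots}
    \le \frac{(1-v_t)(1 - v_t/4)}{1-v_t(1+\beta)},
\]
and while the right-hand side equals $\tfrac{7}{8(1-\beta)}$ at $v_t=1/2$, it tends to $1$ as $v_t \to 0$. Cross-multiplying, the inequality you would need is
$2(1-\beta)v_t^2 + (17\beta - 3)v_t + (1-8\beta) \le 0$, a quadratic with roots $v_t = 1/2$ and $v_t = \tfrac{1-8\beta}{1-\beta}$; for $\beta < 1/15$ the second root exceeds $1/2$, so the inequality fails throughout $(0,1/2)$, and for $1/15 < \beta < 1/8$ it still fails for all $v_t < \tfrac{1-8\beta}{1-\beta}$. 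So the Markov bound is not ``tight at $1/2$ and controlled elsewhere''; it only works at the single endpoint.

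Your proposed fallback does not close this gap either. For small $v_t$, the trivial monotonicity $v_{t+1}\ge v_t$ gives no probability at all for the \emph{strict} multiplicative increase $v_{t+1} \ge (1+\beta)v_t$ that the lemma demands: knowing the expected increment dominates $\beta v_t$ is not the same as knowing the actual increment exceeds $\beta v_t$ with probability $\ge p_\beta$. What is actually needed, and what the paper's proof supplies, is an \emph{upper} bound on the per-step growth so that a \emph{reverse} Markov inequality can be applied. The subtlety is that $v_{t+1}\le 2v_t$ is false in general, because mass lying outside $[-1,1]$ can be translated into the window by $X_{t+1}$. The paper therefore introduces a truncated variable $\tilde v$ (keeping only contributions whose pre-image under the shift by $X_{t+1}$ also lies in $[-1,1]$), for which $\tilde v \le v_{t+1}$, $\tilde v \le 2v_t$, and the expectation bound of \cref{lem:expectation-volume} still holds; reverse Markov on $\tilde v$ with upper bound $2v_t$ then gives $p_\beta$ uniformly over $v_t \in (0,1/2]$. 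This upper-bound construction is the missing idea; without it, neither forward Markov on $1-v_{t+1}$ nor the monotonicity fallback yields the claimed constant.
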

\begin{proof}
The result shall follow easily from reverse Markov's inequality \cite[Lemma 4]{boyd2006randomized} and the bound from \cref{lem:expectation-volume}.
However, doing so requires a suitable upper bound on $v_{t+1}$ and, while~$2v_t$ would serve the purpose, such bound does not hold in general.

We overcome this limitation by fixing $t$ and considering how much $v_t$ would grow in the next step if we were to consider only values $\varepsilon$-approximated at time $t$ that happen to lie in~$[-1, 1]$ after being translated by $X_{t+1}$.
Making it precise by the means of the recurrence in \cref{eq:recurrence-f}, we define
$$
    \tilde{v} = \frac{1}{2} \int_{-1}^1 \left[f_t(z) + \left(1-f_t(z)\right) f_t(z - X_{t+1}) \cdot \ind_{[-1,1]}(z - X_{t+1})\right] \diff z.
$$

This expression differs from the one for $v_{t+1}$ only by the inclusion of the characteristic function of $[-1, 1]$.
This not only implies that $\tilde{v} \le v_{t+1}$, but also that $\tilde{v}$ can replace $v_{t+1}$ in the bound from \cref{lem:expectation-volume}, since the argument provided there eventually restricts itself to integrals within $[-1, 1]$, trivialising $\ind_{[-1,1]}$.
Moreover, as we obtain $\tilde{v}$ without the influence of values from outside $[-1, 1]$, we must have~$\tilde{v} \le 2v_t$.
Finally, using that $t < \tau_1$ implies~$v_t < 1/2$ and chaining the previous conclusions in respective order\arthur{This region is quite bad.
We need to split the following align, but haven't managed to}, we conclude that
\begin{align*}
    \pr{v_{t+1} \ge v_t(1+\beta) \given X_1, \dots, X_t, t < \tau_{1}}
    &\ge \pr{\tilde{v} \ge v_t(1+\beta) \given X_1, \dots, X_t, t < \tau_{1}}\\
    &\ge \frac{\expect{\tilde{v} \given X_1,\dots,X_t, t < \tau_{1}} - v_t(1+\beta)}{2v_t - v_t(1+\beta)} \\
    &\ge \frac{ \frac{9}{8} v_t -v_t(1+\beta)}{2v_t-v_t(1+\beta)}\\
    &= 1 - \frac{7}{8(1-\beta)},
\end{align*}
where we applied the reverse Markov's inequality in the second step.
\end{proof}

The previous lemma naturally leads us to look for bounds on $\tau_1$, that is, to estimate the time needed for the process to reach volume $1/2$.
As expected, the exponential nature of the process yields a logarithmic bound.

\begin{lemma}\label{lem:tau1-bound}
    Let $t$ be an integer and given $\beta \in (0, 1/8)$, let $p_{\beta} = 1 - \frac{7}{8(1-\beta)}$ and
    $
        i^* = \left\lceil \frac{\log\frac{1}{2\varepsilon}}{\log(1+\beta)}\right\rceil.
    $
    If~$t \ge i^*/p_{\beta}$, then
    $$
        \pr{\tau_{1} \leq t} \ge 1 - \exp\left[-\frac{2p_{\beta}^2}{t}\left(t - \frac{i^*}{p_{\beta}}\right)^2\right].
    $$
\end{lemma}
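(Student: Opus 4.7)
The plan is to reduce the problem to a tail bound on a sum of Bernoulli trials. Recall that $v_0=\varepsilon$ (since $f_0=\ind_{(-\varepsilon,\varepsilon)}$) and that $i^*$ was chosen so that $\varepsilon(1+\beta)^{i^*}>1/2$. Hence if at least $i^*$ of the first $t$ steps realise a multiplicative growth by a factor $(1+\beta)$ before $\tau_1$ occurs, then by that time the volume must already have crossed $1/2$, i.e.\ $\tau_1\le t$.

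To formalise this, for $0\le s<t$ define the indicator
\[
    Y_s = \ind_{\{v_{s+1}\ge v_s(1+\beta)\}}\cdot\ind_{\{s<\tau_1\}} + \ind_{\{s\ge\tau_1\}},
\]
so that $Y_s=1$ whenever either $\tau_1$ has already occurred or a growth step takes place. By \cref{lem:exit-proba}, $\Pr[Y_s=1\mid X_1,\dots,X_s]\ge p_\beta$ in every case. As argued above, $\sum_{s<t} Y_s \ge i^*$ forces $\tau_1 \le t$, so
\[
    \pr{\tau_1 > t} \le \pr{\sum_{s=0}^{t-1} Y_s < i^*}.
\]

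The $Y_s$ are not independent, but their conditional success probabilities are uniformly bounded below by $p_\beta$, so one can couple them with i.i.d.\ $\tilde Y_s\sim\text{Bernoulli}(p_\beta)$ satisfying $Y_s\ge\tilde Y_s$ almost surely (revealing the $X_s$ one at a time and using an independent uniform to produce $\tilde Y_s\le Y_s$). Then $\sum_{s<t}Y_s$ stochastically dominates $\sum_{s<t}\tilde Y_s$, and the latter is a sum of $t$ independent Bernoullis with mean $p_\beta t$. Since the hypothesis $t\ge i^*/p_\beta$ gives $p_\beta t\ge i^*$, a one-sided Hoeffding bound yields
\[
    \pr{\sum_{s=0}^{t-1}\tilde Y_s < i^*} \le \exp\!\left(-\frac{2(p_\beta t - i^*)^2}{t}\right) = \exp\!\left(-\frac{2p_\beta^2}{t}\Bigl(t-\frac{i^*}{p_\beta}\Bigr)^{\!2}\right),
\]
which is exactly the required bound after taking complements.

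The only subtlety is handling the conditional dependence of the $Y_s$'s; this is resolved by the stochastic-domination coupling above, which legitimises the use of the classical (independent) Hoeffding inequality. Everything else is a direct application of \cref{lem:exit-proba} together with the choice of $i^*$ and the initial value $v_0=\varepsilon$.
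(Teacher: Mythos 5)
Your proof is correct and follows essentially the same approach as the paper: both arguments count the number of multiplicative $(1+\beta)$-growth events (each occurring with conditional probability at least $p_\beta$ by \cref{lem:exit-proba}), observe that $i^*$ of them suffice to push the volume from $v_0=\varepsilon$ past $1/2$, and close with a one-sided Hoeffding bound on a Binomial$(t,p_\beta)$. The paper reaches the same binomial tail via geometric waiting times for exiting the discretised intervals $(0,\varepsilon],(\varepsilon,\varepsilon(1+\beta)],\dots$ and the negative-binomial/binomial duality, whereas you couple the per-step success indicators directly with i.i.d.\ Bernoullis; the two presentations are interchangeable.
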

\begin{proof}
The main idea behind the proof is to define a new random variable which stochastically dominates $\tau_1$ while being simpler to analyse.
We begin by discretising the domain $(0, 1/2]$ of the volume into sub-intervals $\{I_i\}_{0 \leq i \leq i^*}$ defined as follows:
\[
    \left\{
        \begin{aligned}
            &I_0 = (0, \varepsilon],\\
            &I_i = \left(\varepsilon (1 + \beta)^{i-1},\, \varepsilon (1 + \beta)^i\right] \text{ for } 1 \leq i < i^*,\\
            &I_{i^*} = \left(\varepsilon (1 + \beta)^{i^*-1},\, \frac{1}{2}\right],
        \end{aligned}
    \right.
\]
where $i^*$ is the smallest integer for which $\varepsilon \left(1+\beta\right)^{i^*} \ge 1/2$, that is,
$
    i^* = \left\lceil \frac{\log\frac{1}{2\varepsilon}}{\log(1+\beta)}\right\rceil.
$

Now, for each $i \geq 0$, we direct our interest to the number of steps required for $v_t$ to exit the sub-interval $I_i$ after first entering it.
By \cref{lem:exit-proba}, this number is majorised by a geometric random variable $Y_i \sim \textrm{Geom}(p_{\beta})$.
Therefore, we can conclude that $\tau_{1}$ is stochastically dominated by the sum of such variables, that is, for $t \in \N$, we have that
\begin{align}\label{eq:dominance-geo}
    \pr{\tau_{1} \ge t} \le \pr{\sum_{i=1}^{i^*} Y_i \ge t}.
\end{align}

Let $B_t \sim \textrm{Bin}(t,p_{\beta})$ be a binomial random variable.
For the sum of geometric random variables, it holds that
$
    \pr{\sum_{i=1}^{i^*} Y_i \le t} = \pr{B_t \ge i^*}.
$
Since $\expect{B_t} = t p_\beta$, the Hoeffding bound for binomial random variables \cite[Theorem 1.1]{dubhashi2011} implies that, for all $\lambda \ge 0$, we have that
$
    \Pr[B_t \le tp_{\beta} - \lambda] \le \exp(-2\lambda^2/t).
$
Setting $t$ such that $t p_{\beta} - \lambda = i^*$, we obtain that
\begin{align*}
    \pr{\sum_{i=1}^{i^*} Y_i \ge t}
    \le \pr{B_t \le i^*}
    \le \exp\left[-\frac{2}{t} \left(tp_{\beta} - i^*\right)^2\right]
    = \exp\left[-\frac{2p_{\beta}^2}{t} \left(t - \frac{i^*}{p_{\beta}}\right)^2\right],
\end{align*}
which holds as long as $\lambda = tp_\beta - i^* \ge 0$, that is, for all
$
    t \ge \frac{1}{p_{\beta}}\myceil{\frac{\log \frac{1}{2\varepsilon}}{\log (1 + \beta)}}.
$

The thesis follows by applying this to \cref{eq:dominance-geo} and passing to complementary events.
\end{proof}

\subsection{Growth of the volume from 1/2}\label{subseq:from_one_half}
Here we study the second half of the process: from the moment the volume reaches 1/2 up to the time it gets to $1 - \varepsilon/2$.
We do so by analysing the complementary stochastic process, i.e., by tracking, from time $\tau_1$ onwards, the proportion of the interval $[-1,1]$ that does not admit an $\varepsilon$-approximation.
More precisely, we consider the process $\{w_t\}_{t\ge 0}$, defined by $w_t = 1-v_{\tau_{1}+t}$.

We shall obtain results for $w_t$ similar to those we have proved for $v_t$.
Fortunately, building on the previous results makes those proofs quite straightforward.
We start by noting that a statement analogous to \cref{lem:expectation-volume} follows immediately from the definition of $w_{t+1}$ and \cref{lem:expectation-volume}.

\begin{corollary}\label{lem:expectation-volume-w}
    For all $t \ge 0$, it holds that
    $$
        \expect{w_{t+1} \given X_{1},\dots,X_{\tau_{1}+t}} \leq w_t \left[1 - \frac{1}{4} \left(1-w_t\right)\right].
    $$
\end{corollary}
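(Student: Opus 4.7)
The plan is to reduce the statement directly to \cref{lem:expectation-volume} via the identity $w_t = 1 - v_{\tau_1 + t}$, with the only subtlety being that we must handle the random shift $\tau_1$. Since \cref{lem:expectation-volume} is stated for a fixed deterministic time $t$, I would first decompose the conditional expectation by conditioning on the value of $\tau_1$.

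First, I would write, for each $s \ge 0$,
\[
    \expect{w_{t+1} \given X_1,\dots,X_{\tau_1+t},\, \tau_1 = s}
    = 1 - \expect{v_{s+t+1} \given X_1,\dots,X_{s+t},\, \tau_1 = s}.
\]
Because $\tau_1$ is a stopping time with respect to the natural filtration of $(X_i)_i$, the event $\{\tau_1 = s\}$ is determined by $X_1,\dots,X_s$; hence further conditioning on it does not change the conditional distribution of $X_{s+t+1}$, and we may drop it. Then \cref{lem:expectation-volume} applied at time $s+t$ yields
\[
    \expect{v_{s+t+1} \given X_1,\dots,X_{s+t}}
    \ge v_{s+t}\left[1 + \tfrac{1}{4}(1-v_{s+t})\right].
\]

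Next, substituting $v_{\tau_1+t} = 1 - w_t$ and combining the two displays, I obtain
\[
    \expect{w_{t+1} \given X_1,\dots,X_{\tau_1+t}}
    \le 1 - (1 - w_t)\left[1 + \tfrac{1}{4} w_t\right].
\]
Expanding the right-hand side gives
\[
    1 - (1-w_t)\left(1 + \tfrac{w_t}{4}\right)
    = w_t - \tfrac{w_t}{4} + \tfrac{w_t^2}{4}
    = w_t\left[1 - \tfrac{1}{4}(1 - w_t)\right],
\]
which is the desired bound. Averaging over the value of $\tau_1$ preserves the inequality, completing the argument.

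The only real (and mild) obstacle is the random-time conditioning: one must be careful to justify that \cref{lem:expectation-volume} may be invoked after shifting by $\tau_1$. This is a one-line appeal to the fact that $\tau_1$ is a stopping time; after that, the corollary is a two-line algebraic manipulation.
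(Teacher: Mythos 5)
Your proof is correct and follows essentially the same route as the paper, which simply asserts that the corollary ``follows immediately from the definition of $w_{t+1}$ and \cref{lem:expectation-volume}.'' You are more careful than the paper in handling the random shift by $\tau_1$---conditioning on $\{\tau_1 = s\}$ and appealing to the stopping-time property---a detail the paper glosses over; the final ``averaging'' step is better phrased as observing that the bound holds on each of the events $\{\tau_1 = s\}$, which partition the probability space.
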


Let $\tau_2$ the first time that $w_t$ gets smaller than or equal to $\varepsilon/2$, that is, let
$$
    \tau_2 = \min\left\{t \ge 0: w_t \le \varepsilon/2\right\}.
$$
The following lemma bounds this quantity, in analogy to \cref{lem:tau1-bound}.

\begin{lemma}\label{lem:tau2-bound}
    For all $t > 0$, it holds that
    $$
        \pr{\tau_2 \leq t} \geq 1 - \frac{1}{\varepsilon} \left(\frac{7}{8}\right)^t.
    $$
\end{lemma}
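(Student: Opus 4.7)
The plan is to combine the per-step multiplicative decay given by \cref{lem:expectation-volume-w} with a simple Markov bound, exploiting the fact that after time $\tau_1$ the process $w_t$ is bounded above by $1/2$ and is monotone non-increasing.

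First, I would observe two monotonicity facts. The sequence $v_t$ is non-decreasing in $t$, since any subset that $\varepsilon$-approximates a value at time $t$ is still available at time $t+1$; hence $w_t$ is non-increasing in $t$, and in particular $w_0 = 1 - v_{\tau_1} \le 1/2$ (by definition of $\tau_1$), so $w_t \le 1/2$ for every $t \ge 0$. This monotonicity is important because it means the event $\{\tau_2 \le t\}$ coincides with $\{w_t \le \varepsilon/2\}$, removing the need to track a first-hitting time explicitly.

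Next, I would turn the bound of \cref{lem:expectation-volume-w} into a deterministic geometric contraction of the conditional expectation. Since $w_t \le 1/2$, we have $1 - w_t \ge 1/2$ and therefore
\[
    \expect{w_{t+1} \given X_1, \dots, X_{\tau_1 + t}} \le w_t\left[1 - \frac{1}{4}(1 - w_t)\right] \le \frac{7}{8} w_t.
\]
Taking expectations on both sides and iterating via the tower property yields $\expect{w_t} \le (7/8)^t \expect{w_0} \le \frac{1}{2}(7/8)^t$.

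Finally, I would apply Markov's inequality to the non-negative random variable $w_t$:
\[
    \pr{w_t > \varepsilon/2} \le \frac{\expect{w_t}}{\varepsilon/2} \le \frac{1}{\varepsilon}\left(\frac{7}{8}\right)^t.
\]
Combining with the monotonicity observation $\{w_t \le \varepsilon/2\} \subseteq \{\tau_2 \le t\}$ and passing to complements gives the claimed bound. I do not expect any real obstacle here; the main conceptual point is simply to use the uniform bound $w_t \le 1/2$ so that \cref{lem:expectation-volume-w} becomes a clean multiplicative decay by a factor of $7/8$ per step.
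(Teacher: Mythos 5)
Your proposal is correct and matches the paper's proof essentially step for step: apply \cref{lem:expectation-volume-w} with the bound $1 - w_t \ge 1/2$ to obtain a $7/8$-contraction of the conditional expectation, iterate via the tower property to get $\expect{w_t} \le \tfrac{1}{2}(7/8)^t$, then finish with Markov's inequality. The only cosmetic difference is that you spell out the monotonicity of $v_t$ (and hence $w_t$) explicitly to justify $w_t \le 1/2$ and the inclusion $\{w_t \le \varepsilon/2\} \subseteq \{\tau_2 \le t\}$, whereas the paper leaves this implicit; this is a harmless (and arguably slightly cleaner) addition, not a different argument.
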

\begin{proof}
Applying that $1 - w_t = v_{\tau_1 + t} > 1/2$ to \cref{lem:expectation-volume-w} gives the bound
\begin{equation}\label{eq:UB-w}
    \expect{w_{t+1} \given X_{1},\dots,X_{\tau_{1}+t}}
    \le \frac{7}{8} w_t.
\end{equation}
Moreover, from the conditional expectation theory, for any two random variables $X$ and $Y$, we have $\expect{\expect{X \given Y}} = \expect{X}$.
From this and \cref{eq:UB-w}, we can conclude that
\begin{align*}
    \expect{w_t}
    = \expect{\expect{w_t \given X_{1},\dots,X_{\tau_1 + t - 1}}}
    \leq \frac{7}{8} \expect{w_{t-1}},
\end{align*}
which, by recursion, yields that
\begin{align*}
    \expect{w_t}
    \leq \left(\frac{7}{8}\right)^{t} \expect{w_0}
    \leq \frac{1}{2} \left(\frac{7}{8}\right)^{t}.
\end{align*}

Finally, by Markov's inequality,
\begin{align*}
    \pr{\tau_2 \ge t}
    \le \pr{w_t \ge \frac{\varepsilon}{2}}
    \le \frac{2\expect{w_t}}{\varepsilon}
    \le \frac{1}{\varepsilon} \left(\frac{7}{8}\right)^{t},
\end{align*}
and the thesis follows from considering the complementary event.
\end{proof}

\subsection{Putting everything together}
In this section we conclude our argument, finally proving \cref{thm:lueker}.
We first prove a more general statement and then detail how it implies the theorem.

Let $\tau = \tau_1 + \tau_2$, the first time at which the process $\{v_t\}_{t \ge 0}$ reaches at least $1-\varepsilon/2$.
\begin{lemma}\label{lem:final-tau}
Let $\varepsilon \in (0, 1/3)$.
There exist constants $C' > 0$ and $\kappa > 0$ such that for every $t \ge C' \log\frac{1}{\varepsilon}$, it holds that
$$
    \pr{\tau \le t} \geq 1 - 2\exp\left[-\frac{1}{\kappa t} \left(t - C'\log\frac{1}{\varepsilon}\right)^2\right].
$$
\end{lemma}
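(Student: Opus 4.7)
The plan is to apply a union bound to the decomposition $\tau = \tau_1 + \tau_2$. For any splitting $t = t_1 + t_2$, we have $\{\tau > t\} \subseteq \{\tau_1 > t_1\} \cup \{\tau_2 > t_2\}$, so it suffices to control each tail separately and sum them; all the work is in choosing the splitting so that \cref{lem:tau1-bound} and \cref{lem:tau2-bound} together produce a tail of the shape required by the statement.

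First, I will fix any convenient $\beta \in (0, 1/8)$ (say $\beta = 1/16$), which pins down the constant $p_{\beta}$ from \cref{lem:exit-proba}. Since $i^{*} = \lceil \log(1/(2\varepsilon))/\log(1+\beta)\rceil$ and $\varepsilon < 1/3$, absorbing the $+1$ from the ceiling into a multiplicative constant gives $i^{*}/p_{\beta} \le c_1 \log(1/\varepsilon)$ for some $c_1 > 0$ depending only on $\beta$. Setting $c_2 = 1/\log(8/7)$, $C' = c_1 + c_2$, and $\Delta = t - C'\log(1/\varepsilon) \ge 0$, I will split symmetrically as $t_1 = c_1\log(1/\varepsilon) + \Delta/2$ and $t_2 = c_2\log(1/\varepsilon) + \Delta/2$. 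This ensures $t_1 \ge i^{*}/p_{\beta}$ so that \cref{lem:tau1-bound} applies, and $t_2 > 0$ so that \cref{lem:tau2-bound} applies.

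Plugging in: since $t_1 - i^{*}/p_{\beta} \ge \Delta/2$ and $t_1 \le t$, \cref{lem:tau1-bound} yields $\pr{\tau_1 > t_1} \le \exp[-p_{\beta}^2 \Delta^2/(2t)]$; and a short computation using $c_2\log(8/7) = 1$ to cancel the leading $(1/\varepsilon)$ factor gives $\pr{\tau_2 > t_2} \le \exp[-\log(8/7)\Delta/2]$ from \cref{lem:tau2-bound}.

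The main obstacle is that this second bound is merely linear in $\Delta$, while the statement asks for quadratic decay of the form $\exp[-\Delta^2/(\kappa t)]$. I will reconcile this via $\Delta \le t$: then $\Delta^2/(\kappa t) \le \Delta/\kappa$, which is at most $\log(8/7)\Delta/2$ provided $\kappa \ge 2/\log(8/7)$. Consequently, taking $\kappa = \max\{2/p_{\beta}^2,\, 2/\log(8/7)\}$ bounds both tails by $\exp[-\Delta^2/(\kappa t)]$, and the union bound delivers the claimed $2\exp[-\Delta^2/(\kappa t)]$.
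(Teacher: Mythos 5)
Your proof is correct and follows essentially the same strategy as the paper's: decompose $\tau = \tau_1 + \tau_2$, split $t$ into two pieces, apply \cref{lem:tau1-bound,lem:tau2-bound} with a union bound, and use $\Delta \le t$ to coerce the linear tail from \cref{lem:tau2-bound} into the quadratic shape $\exp[-\Delta^2/(\kappa t)]$. The only difference is cosmetic: you split $t$ adaptively, giving each stopping time its own $\log(1/\varepsilon)$ baseline plus half the slack $\Delta$, whereas the paper splits symmetrically as $t/2 + t/2$ and then reshapes both resulting exponentials by direct computation.
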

\begin{proof}
Let $\beta = \frac{1}{16}$ and $p_{\beta} = 1 - \frac{7}{8 (1 - \beta)} = \frac{1}{15}$.
The definition of $\tau$ allows us to apply \cref{lem:tau1-bound,lem:tau2-bound} quite directly.
Indeed if, for the sake of \cref{lem:tau1-bound}, we assume $t \ge \frac{2}{p_{\beta}}\bigl\lceil\frac{\log \frac{1}{2\varepsilon}}{\log (1 + \beta)}\bigr\rceil$, we have that
\begin{align}
    \pr{\tau \le t}
    &= \pr{\tau_1 + \tau_2 \le t} \nonumber\\
    &\ge \pr{\tau_1 \le t/2, \tau_2 \le t/2} \nonumber\\
    &\ge \pr{\tau_1 \le t/2} + \pr{\tau_2 \le t/2} - 1 \nonumber\\
    &\ge 1 - \exp\left[-\frac{p_{\beta}^2}{t}\left(t - \frac{2}{p_{\beta}}\myceil{\frac{\log \frac{1}{2\varepsilon}}{\log (1 + \beta)}}\right)^2\right] - \frac{1}{\varepsilon} \left(\frac{7}{8}\right)^{t/2} \nonumber\\
    &= 1 - \exp\left[-\frac{1}{15^2 t}\left(t - 30\myceil{\frac{\log \frac{1}{2\varepsilon}}{\log\frac{17}{16}}}\right)^2\right] - \frac{1}{\varepsilon} \left(\frac{7}{8}\right)^{t/2},\label{eq:trick}
\end{align}
where the second inequality holds by the union bound.
The remaining of the proof consists in computations to connect this expression to the one in the statement.

Consider the first exponential term in \cref{eq:trick}.
Taking $t \ge \frac{60}{\log \frac{17}{16}} \cdot \log\frac{1}{\varepsilon}$, since $\varepsilon < 1/3$, it follows that
\begin{align*}
    \exp\left[-\frac{1}{15^2 t}\left(t - 30\myceil{\frac{\log \frac{1}{2\varepsilon}}{\log\frac{17}{16}}}\right)^2\right]
    &\le \exp\left[-\frac{1}{15^2 t}\left(t - \frac{60}{\log\frac{17}{16}} \cdot \log\frac{1}{\varepsilon}\right)^2\right].
\end{align*}

Now, consider the second exponential term in \cref{eq:trick}.
It holds that
\begin{align*}
    \frac{1}{\varepsilon} \left(\frac{7}{8}\right)^{\frac{t}{2}}
    &= \exp\left[\log\frac{1}{\varepsilon} - \frac{t}{2} \log\frac{8}{7}\right] \\
    &\le \exp\left[\log\frac{1}{\varepsilon} - \frac{t}{15}\right]
    = \exp\left[-\frac{1}{15} \cdot \frac{1}{t - 15 \cdot \log\frac{1}{\varepsilon}} \cdot \left(t - 15 \cdot \log\frac{1}{\varepsilon}\right)^2\right].
\end{align*}
Moreover, for $t \ge 15 \cdot \log\frac{1}{\varepsilon}$,
\begin{align*}
    \exp\left[-\frac{1}{15} \cdot \frac{1}{t - 15 \cdot \log\frac{1}{\varepsilon}} \cdot \left(t - 15 \cdot \log\frac{1}{\varepsilon}\right)^2\right]
    &\le \exp\left[- \frac{1}{15t} \left(t - 15 \cdot \log\frac{1}{\varepsilon}\right)^2 \right]
    \\&\le \exp\left[-\frac{1}{15^2 t}\left(t - \frac{60}{\log\frac{17}{16}} \cdot \log\frac{1}{\varepsilon}\right)^2\right].
\end{align*}

Altogether, we have that
\begin{align*}
    \exp\left[-\frac{p_{\beta}^2}{t}\left(t - \frac{2}{p_{\beta}}\myceil{\frac{\log \frac{1}{2\varepsilon}}{\log (1 + \beta)}}\right)^2\right]
    + \frac{1}{\varepsilon}\cdot \left(\frac{7}{8}\right)^{t/2}
    &\le 2\exp\left[-\frac{1}{15^2 t}\left(t - \frac{60}{\log \frac{17}{16}} \cdot \log\frac{1}{\varepsilon}\right)^2\right],
\end{align*}
and the thesis follows by setting $\kappa = 15^2$ and $C' = 60/\log(17/16)$.
\end{proof}

The expression in the claim of \cref{lem:final-tau} can be reformulated as
$$
    \pr{v_t \ge 1 - \frac{\varepsilon}{2}} \ge 1 - 2\exp\left[-\frac{1}{\kappa t} \left(t - C'\log\frac{1}{\varepsilon}\right)^2\right];
$$
hence,
\cref{thm:lueker} follows by taking $C \ge 3C'$ and observing that once we can approximate all but an $\varepsilon/2$ proportion of the interval $[-1,1]$, any $z \in [-1,1]$ either is $\varepsilon$-approximated itself, or is at most $\varepsilon$ away from a value that is, which implies that $z$ is $2\varepsilon$-approximated.
 \bibliographystyle{alpha}
    \bibliography{biblio.bib}

\newcommand{\etalchar}[1]{$^{#1}$}
\begin{thebibliography}{WDM{\etalchar{+}}21}

\bibitem[Azu67]{azuma1967weighted}
Kazuoki Azuma.
\newblock Weighted sums of certain dependent random variables.
\newblock {\em Tohoku Mathematical Journal, Second Series}, 19(3):357--367,
  1967.

\bibitem[BCP01]{BorgsCP01}
Christian Borgs, Jennifer~T. Chayes, and Boris~G. Pittel.
\newblock Phase transition and finite-size scaling for the integer partitioning
  problem.
\newblock {\em Random Struct. Algorithms}, 19(3-4):247--288, 2001.

\bibitem[BdCC{\etalchar{+}}22]{becchettiMultidimensionalRandomSubset2022}
Luca Becchetti, Arthur Carvalho~Walraven da~Cunha, Andrea Clementi, Francesco
  d'~Amore, Hicham Lesfari, Emanuele Natale, and Luca Trevisan.
\newblock On the {{Multidimensional Random Subset Sum Problem}}.
\newblock report, {Inria \& Université Cote d'Azur, CNRS, I3S, Sophia
  Antipolis, France ; Sapienza Università di Roma, Rome, Italy ; Università
  Bocconi, Milan, Italy ; Università di Roma Tor Vergata, Rome, Italy}, 2022.

\bibitem[BDHK22]{borst2022discrepancy}
Sander Borst, Daniel Dadush, Sophie Huiberts, and Danish Kashaev.
\newblock A nearly optimal randomized algorithm for explorable heap selection.
\newblock {\em CoRR}, abs/2210.05982, 2022.

\bibitem[BDHT22]{borstIntegralityGapBinary2022}
Sander Borst, Daniel Dadush, Sophie Huiberts, and Samarth Tiwari.
\newblock On the integrality gap of binary integer programs with {{Gaussian}}
  data.
\newblock {\em Mathematical Programming}, 2022.

\bibitem[Bel66]{bellman1966dynamic}
Richard Bellman.
\newblock Dynamic programming.
\newblock {\em Science}, 153(3731):34--37, 1966.

\bibitem[BGPS06]{boyd2006randomized}
Stephen Boyd, Arpita Ghosh, Balaji Prabhakar, and Devavrat Shah.
\newblock Randomized gossip algorithms.
\newblock {\em IEEE transactions on information theory}, 52(6):2508--2530,
  2006.

\bibitem[BLMG22]{BurkholzLMG21}
Rebekka Burkholz, Nilanjana Laha, Rajarshi Mukherjee, and Alkis Gotovos.
\newblock On the existence of universal lottery tickets.
\newblock In {\em International Conference on Learning Representations}, 2022.

\bibitem[BV03]{beierRandomKnapsackExpected2003}
Rene Beier and Berthold Vöcking.
\newblock Random knapsack in expected polynomial time.
\newblock In {\em Proceedings of the Thirty-Fifth Annual {{ACM}} Symposium on
  {{Theory}} of Computing}, {{STOC}} '03, pages 232--241. {Association for
  Computing Machinery}, 2003.

\bibitem[BV04]{beierProbabilisticAnalysisKnapsack2004}
Rene Beier and Berthold Vöcking.
\newblock Probabilistic analysis of knapsack core algorithms.
\newblock In {\em Proceedings of the Fifteenth Annual {{ACM-SIAM}} Symposium on
  {{Discrete}} Algorithms}, {{SODA}} '04, pages 468--477. {Society for
  Industrial and Applied Mathematics}, 2004.

\bibitem[BW21]{bringmann2021near}
Karl Bringmann and Philip Wellnitz.
\newblock On near-linear-time algorithms for dense subset sum.
\newblock In {\em Proceedings of the 2021 ACM-SIAM Symposium on Discrete
  Algorithms (SODA)}, pages 1777--1796. SIAM, 2021.

\bibitem[CJRS22]{chen2022}
Xi~Chen, Yaonan Jin, Tim Randolph, and Rocco~A. Servedio.
\newblock Average-case subset balancing problems.
\newblock In Joseph~(Seffi) Naor and Niv Buchbinder, editors, {\em Proceedings
  of the 2022 {ACM-SIAM} Symposium on Discrete Algorithms, {SODA} 2022, Virtual
  Conference / Alexandria, VA, USA, January 9 - 12, 2022}, pages 743--778.
  {SIAM}, 2022.

\bibitem[dCNV22]{daCunhaNV22}
Arthur da~Cunha, Emanuele Natale, and Laurent Viennot.
\newblock Proving the strong lottery ticket hypothesis for convolutional neural
  networks.
\newblock In {\em International Conference on Learning Representations}, 2022.

\bibitem[DK17]{doerrRandomizedRumorSpreading2017}
Benjamin Doerr and Anatolii Kostrygin.
\newblock Randomized rumor spreading revisited.
\newblock In Ioannis Chatzigiannakis, Piotr Indyk, Fabian Kuhn, and Anca
  Muscholl, editors, {\em 44th International Colloquium on Automata, Languages,
  and Programming, {ICALP} 2017, July 10-14, 2017, Warsaw, Poland}, volume~80
  of {\em LIPIcs}, pages 138:1--138:14. Schloss Dagstuhl - Leibniz-Zentrum
  f{\"{u}}r Informatik, 2017.

\bibitem[DP09]{dubhashi2011}
Devdatt~P. Dubhashi and Alessandro Panconesi.
\newblock {\em Concentration of Measure for the Analysis of Randomized
  Algorithms}.
\newblock Cambridge University Press, 2009.

\bibitem[EM19]{esser2019low}
Andre Esser and Alexander May.
\newblock Low weight discrete logarithms and subset sum in $2^{0.65 n}$ with
  polynomial memory.
\newblock {\em Cryptology ePrint Archive}, 2019.

\bibitem[FB21]{FischerB21}
Jonas Fischer and Rebekka Burkholz.
\newblock Towards strong pruning for lottery tickets with non-zero biases.
\newblock {\em CoRR}, abs/2110.11150, 2021.

\bibitem[GJ79]{GareyJ79}
M.~R. Garey and David~S. Johnson.
\newblock {\em Computers and Intractability: {A} Guide to the Theory of
  NP-Completeness}.
\newblock W. H. Freeman, 1979.

\bibitem[GJ01]{GemmellJ01}
Peter Gemmell and Anna~M. Johnston.
\newblock Analysis of a subset sum randomizer.
\newblock {\em {IACR} Cryptol. ePrint Arch.}, page~18, 2001.

\bibitem[Hay02]{Hayes02}
Brian Hayes.
\newblock The easiest hard problem.
\newblock {\em American Scientist}, 90:113--117, 2002.

\bibitem[HM18]{helm2018subset}
Alexander Helm and Alexander May.
\newblock Subset sum quantumly in 1.17\^{} n.
\newblock In {\em 13th Conference on the Theory of Quantum Computation,
  Communication and Cryptography (TQC 2018)}. Schloss Dagstuhl-Leibniz-Zentrum
  fuer Informatik, 2018.

\bibitem[JAMS91]{Johnson91}
David~S. Johnson, Cecilia~R. Aragon, Lyle~A. McGeoch, and Catherine Schevon.
\newblock Optimization by simulated annealing: an experimental evaluation; part
  ii, graph coloring and number partitioning.
\newblock {\em Operations research}, 39(3):378--406, 1991.

\bibitem[JVW21]{jin2021fast}
Ce~Jin, Nikhil Vyas, and Ryan Williams.
\newblock Fast low-space algorithms for subset sum.
\newblock In {\em Proceedings of the 2021 ACM-SIAM Symposium on Discrete
  Algorithms (SODA)}, pages 1757--1776. SIAM, 2021.

\bibitem[JW18]{jin2018simple}
Ce~Jin and Hongxun Wu.
\newblock A simple near-linear pseudopolynomial time randomized algorithm for
  subset sum.
\newblock {\em arXiv preprint arXiv:1807.11597}, 2018.

\bibitem[KG11]{KateG11}
Aniket Kate and Ian Goldberg.
\newblock Generalizing cryptosystems based on the subset sum problem.
\newblock {\em Int. J. Inf. Sec.}, 10(3):189--199, 2011.

\bibitem[Lue98]{lueker1998}
George~S. Lueker.
\newblock Exponentially small bounds on the expected optimum of the partition
  and subset sum problems.
\newblock {\em Random Structures and Algorithms}, 12:51--62, 1998.

\bibitem[Mer01]{Mertens01}
Stephan Mertens.
\newblock A physicist's approach to number partitioning.
\newblock {\em Theor. Comput. Sci.}, 265(1-2):79--108, 2001.

\bibitem[PRN{\etalchar{+}}20]{PensiaRNVP20}
Ankit Pensia, Shashank Rajput, Alliot Nagle, Harit Vishwakarma, and Dimitris~S.
  Papailiopoulos.
\newblock Optimal lottery tickets via subset sum: Logarithmic
  over-parameterization is sufficient.
\newblock In Hugo Larochelle, Marc'Aurelio Ranzato, Raia Hadsell,
  Maria{-}Florina Balcan, and Hsuan{-}Tien Lin, editors, {\em Advances in
  Neural Information Processing Systems 33: Annual Conference on Neural
  Information Processing Systems 2020, NeurIPS 2020, December 6-12, 2020,
  virtual}, 2020.

\bibitem[RNMS96]{Ruml96}
Wheeler Ruml, J.~Thomas Ngo, Joe Marks, and Stuart~M Shieber.
\newblock Easily searched encodings for number partitioning.
\newblock {\em Journal of Optimization Theory and Applications},
  89(2):251--291, 1996.

\bibitem[Sun03]{Sun03}
Zhi-Wei Sun.
\newblock Unification of zero-sum problems, subset sums and covers of z.
\newblock {\em Electronic Research Announcements of The American Mathematical
  Society}, 9:51--60, 2003.

\bibitem[WDM{\etalchar{+}}21]{WangDMWLLHMRD21}
Chenghong Wang, Jieren Deng, Xianrui Meng, Yijue Wang, Ji~Li, Sheng Lin, Shuo
  Han, Fei Miao, Sanguthevar Rajasekaran, and Caiwen Ding.
\newblock A secure and efficient federated learning framework for {NLP}.
\newblock In Marie{-}Francine Moens, Xuanjing Huang, Lucia Specia, and
  Scott~Wen{-}tau Yih, editors, {\em Proceedings of the 2021 Conference on
  Empirical Methods in Natural Language Processing, {EMNLP} 2021, Virtual Event
  / Punta Cana, Dominican Republic, 7-11 November, 2021}, pages 7676--7682.
  Association for Computational Linguistics, 2021.

\end{thebibliography}
\end{document}